\newtheorem{theorem}{Theorem}[section]
\newtheorem{lemma}[theorem]{Lemma}
\newtheorem{conjecture}[theorem]{Conjecture}
\newtheorem{proposition}[theorem]{Proposition}
\newtheorem{corollary}[theorem]{Corollary}
\theoremstyle{definition}
\newtheorem{defn}[theorem]{Definition}
\newtheorem{remark}[theorem]{Remark}
\newcommand{\bd}{\begin{defn}}
\newcommand{\ed}{\end{defn}}
\newcommand{\bl}{\begin{lemma}}
\newcommand{\el}{\end{lemma}}
\newcommand{\bp}{\begin{proposition}}
\newcommand{\ep}{\end{proposition}}
\newcommand{\bt}{\begin{theorem}}
\newcommand{\et}{\end{theorem}}
\newcommand{\bc}{\begin{corollary}}
\newcommand{\ec}{\end{corollary}}
\newcommand{\br}{\begin{remark}}
\newcommand{\er}{\end{remark}}
\newcommand{\ba}{\begin{array}}
\newcommand{\ea}{\end{array}}
\newcommand{\bpf}{\begin{proof}}
\newcommand{\epf}{\end{proof}}
\newcommand{\Z}{\mathbb{Z}}
\newcommand{\Q}{\mathbb{Q}}
\newcommand{\Zp}{\mathbb{Z}_{p}}
\newcommand{\Qp}{\mathbb{Q}_{p}}
\newcommand{\Ep}{E[p^{\infty}]}
\newcommand{\Ga}{\Gamma}
\newcommand{\ga}{\gamma}
\newcommand{\La}{\Lambda}
\newcommand{\si}{\sigma}
\newcommand{\Si}{\Sigma}
\newcommand{\ze}{\zeta}
\newcommand{\s}{\overrightarrow{s}}
\DeclareMathOperator{\Sel}{Sel} \DeclareMathOperator{\Gal}{Gal}
\newcommand{\cyc}{\mathrm{cyc}}
\newcommand{\Sss}{\Sigma^{ss}}
\newcommand{\ot}{\otimes}
\newcommand{\ilim}{\displaystyle \mathop{\varinjlim}\limits}
\newcommand{\plim}{\displaystyle \mathop{\varprojlim}\limits}
\newcommand{\im}{\mathrm{im}\,}
\newcommand{\lra}{\longrightarrow}
\newcommand{\ps}[1]{[[ #1 ]]}
  \DeclareFontFamily{U}{wncy}{}
  \DeclareFontShape{U}{wncy}{m}{n}{<->wncyr10}{}
  \DeclareSymbolFont{mcy}{U}{wncy}{m}{n}
  \DeclareMathSymbol{\sha}{\mathord}{mcy}{"58}
\begin{document}
\title{On the algebraic functional equation for the mixed signed Selmer group over multiple $\Zp$-extensions}
 \author{ Suman Ahmed\footnote{School of Mathematics and Statistics,
Central China Normal University, Wuhan, 430079, P.R.China.
 E-mail: \texttt{npur.suman@gmail.com}}  \quad
  Meng Fai Lim\footnote{School of Mathematics and Statistics $\&$ Hubei Key Laboratory of Mathematical Sciences,
Central China Normal University, Wuhan, 430079, P.R.China.
 E-mail: \texttt{limmf@mail.ccnu.edu.cn}} }
\date{}
\maketitle

\begin{abstract} \footnotesize
\noindent
Let $E$ be an elliptic curve defined over a number field with good reduction at all primes above a fixed odd prime $p$, where at least one of which is a supersingular prime of $E$. In this paper, we will establish the algebraic functional equation for the mixed signed Selmer groups of an elliptic curve with good reduction at every prime above $p$ over a multiple $\Zp$-extension.

\medskip
\noindent Keywords and Phrases:  Algebraic functional equation, signed Selmer groups.

\smallskip
\noindent Mathematics Subject Classification 2010: 11G05, 11R23, 11S25.
\end{abstract}

\section{Introduction}

Throughout this article, $p$ will always denote an odd prime number. Let $E$ be an elliptic curve defined over a number field $F$. If $E$ has good ordinary reduction at every prime of $F$ above $p$, a well-known conjecture of Mazur \cite{Maz} asserts that the $p$-primary Selmer group of $E$ over the cyclotomic $\Zp$-extension is cotorsion over
$\Zp\ps{\Gal(F^\cyc/F)}$. Following Iwasawa \cite{Iw73}, Mazur went on further predicting that the characteristic ideal of this cotorsion Selmer group has a precise description in terms of an appropriate $p$-adic $L$-function which nowadays is coined the Iwasawa main conjecture (also see \cite{G89, K, SU}). In view of the above correspondence, one would expect that there should be an algebraic relation of the characteristic ideals of the Selmer groups mirroring the functional equation of the $p$-adic $L$-function. Indeed, this has been extensively studied by Greenberg \cite{G89}, where he even established such a relation (which he coined as ``the algebraic functional equation") unconditionally without assuming the main conjecture nor the existence of the $p$-adic $L$-function. Subsequently, this direction of study has been generalized to more general $p$-adic Lie extensions (for instance, see \cite{LLTT, Za08, Za10}).

However, if the elliptic curve $E$ has supersingular reduction at one prime above $p$, then the $p$-primary Selmer group of $E$ over $F^\cyc$
is not expected to be cotorsion over $\Zp\ps{\Gal(F^\cyc/F)}$ (see \cite{Sch85}). It was due to the remarkable insight of Kobayashi \cite{Kob} that one has to replace the consideration of the $p$-primary Selmer group by some smaller subgroups. More precisely, Kobayashi constructed the plus and minus Selmer groups of an elliptic curve over $\Q^\cyc$ which are contained in the $p$-primary Selmer groups. He was able to show that these signed Selmer groups are cotorsion over $\Zp\ps{\Gal(\Q^\cyc/\Q)}$, and formulate a main conjecture (and even prove one-side divisibility) relating the characteristic ideals of these signed Selmer groups to the signed $p$-adic $L$-functions of Pollack \cite{Po}. Motivated by this main conjecture, Kim \cite{Kim08} first established an algebraic functional equation for the signed Selmer group over a cyclotomic $\Zp$-extension (also see \cite{AL3, LeiPon, Sp17}). In \cite{Kim14}, Kim developed a theory of mixed signed Selmer groups of an elliptic curve over $\Zp^2$-extension of an imaginary quadratic field, where he chose either the plus or minus condition for each supersingular prime, and so in this situation, one has four signed Selmer groups to work with. He then proposed a main conjecture relating these Selmer groups to the mixed signed $p$-adic $L$-functions of Loeffler \cite{Loe}. Recently, B\"{u}y\"{u}kboduk-Lei \cite{BL3} has established an analytic functional equation for (some of) the $p$-adic $L$-functions of Loeffler.
In view of this and the main conjecture, one would expect that the signed Selmer groups would satisfy an algebraic functional equation.

The goal of this paper is to prove such an algebraic functional equation. In fact, we shall put ourselves in a slightly more general framework following that of Kitajima-Otsuki \cite{KO}. Namely, our elliptic curve has good reduction at all primes of $F$ above $p$, where at least one of which is a supersingular prime of the said elliptic curve. Let $F_\infty$ be a $\Zp^d$-extension of $F$ which contains the cyclotomic $\Zp$-extension $F^{\cyc}$ and satisfies the property that every prime of $F^{\cyc}$ above $p$ at which $E$ has good supersingular reduction is unramified in $F_\infty/F^{\cyc}$. The latter ramification condition on $F_\infty$ is necessary for us to be able to apply the work of Kim \cite{Kim14} in defining the plus/minus norm groups which in turn is required for the definition of the mixed signed Selmer groups over the $\Zp^d$-extension $F_\infty$. We should mention that the mixed signed Selmer group defined here is a blend of those considered in \cite{Kim14} and \cite{KO}, which was also studied in \cite{LLAk}. In \cite{LLTT}, Lai-Longhi-Tan-Trihan developed a theory of $\Ga$-system (where $\Ga\cong \Zp^d$) which they utilized to prove an algebraic functional equation for abelian varieties with good ordinary reduction at all primes above $p$ over a $\Zp^d$-extension of a global field. We shall adopt their approach in proving our algebraic functional equation for the mixed signed Selmer group over the $\Zp^d$-extension $F_\infty$. To the best knowledge of the authors, the (appropriate) $p$-adic $L$-functions have not been constructed in this mixed reduction setting. Therefore, besides providing evidence to the main conjecture in the scenario considered by Kim and Loeffler, our result also provides some positive belief for the conjectural functional equation of the (conjectural) $p$-adic $L$-functions in this mixed reduction setting.

It would definitely be interesting to study whether the techniques in this article can be applied to the setting of the multi-signed Selmer groups for a non-ordinary
motive with Hodge-Tate weights being $0$ and $1$ as considered by B\"{u}y\"{u}kboduk-Lei \cite{BL, BL2}. Over the cyclotomic $\Zp$-extension, such an algebraic functional equation for these multi-signed Selmer groups has been established in \cite{LeiPon}. However, unlike the elliptic curve situation, where a local theory of plus/minus norm groups over the $\Zp^2$-extension of $\Qp$ is available thanks to the work of Kim \cite{Kim14}, it is not clear to the authors how to extend the local theory of B\"{u}y\"{u}kboduk-Lei to the $\Zp^2$-extension of $\Qp$ at this point of writing.

We now give an outline of the paper. In Section \ref{elliptic curve over local}, we review the plus/minus conditions of Kobayashi as extended by Kim to a $\Zp^2$-extension of $\Qp$. We also discuss a result of Kim on the orthogonality properties of these plus/minus conditions under the local Tate pairing. Section \ref{Selmer} is where we define the signed Selmer groups. We also introduce the strict signed Selmer groups. These latter Selmer groups coincide with the signed Selmer group on the level of $F_\infty$ (but may differ on the intermediate finite subextensions). However, the local conditions in defining these strict signed Selmer group satisfy certain orthogonality conditions, which is an important aspect for our proof, as it allows us to invoke a theorem of Flach \cite{Fl}. Our main theorem (Theorem \ref{main theorem}) is stated in this section, and its proof will be given in Section \ref{Proof sec}. In this last section, we will show that the strict signed Selmer groups, via a combination of the results of Kim and Flach, give rise to a $\Ga$-system in the sense of Lai-Longhi-Tan-Trihan \cite{LLTT}. From there, we apply the machinery developed by Lai \textit{et al} to establish our theorem.

\subsection*{Acknowledgement}
We thank Byoung Du Kim and Antonio Lei for their interests and comments. We also like to thank Antonio Lei for making us aware of the paper \cite{BL3}.
The research of this article took place when S. Ahmed was a postdoctoral fellow at Central China Normal University, and he would like to acknowledge the hospitality
and conducive working conditions provided by the said institute.
M. F. Lim is supported by the National Natural Science Foundation of China under Grant No. 11550110172 and Grant No. 11771164.

\section{Supersingular elliptic curves over local fields} \label{elliptic curve over local}

In this section, we record certain results on supersingular elliptic curves over a $p$-adic local field. Let $E$ be an elliptic curve defined over $\Qp$ with good supersingular reduction and $a_p=1 + p - |\widetilde{E}(\mathbb{F}_p)| = 0$, where $\widetilde{E}$ is the reduction of $E$. Denote by $\widehat{E}$ the formal group of $E$. For convenience, if $L$ is an extension of $\Qp$, we shall write $\widehat{E}(L)$ for $\widehat{E}(\mathfrak{m}_L)$, where $\mathfrak{m}_L$ is the maximal ideal of the ring of integers of $L$. Fix a finite unramified extension $K$ of $\Qp$. Denote by $K^{\cyc}$ (resp., $K^{nr}$) the cyclotomic (resp, the unramified) $\Zp$-extension  of $K$. If $n\ge0$ is an integer, we write  $K_n$  (resp. $K^{(n)}$) for  the unique subextension of $K^{\cyc}/K$ (resp. $K^{nr}/K$), whose degree over $K$ is equal to $p^n$.

\begin{lemma} \label{supersingular points}
 The formal groups $\widehat{E}(K^{(m)}K_n)$ has no $p$-torsion for all integers $m, n\ge0$. In particular, $E(K^{(m)}K_n)$ has no $p$-torsion for every $m, n$.
 \end{lemma}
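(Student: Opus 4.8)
The plan is to show that the formal group $\widehat{E}(K^{(m)}K_n)$ has no nontrivial $p$-torsion by a Galois-cohomological / ramification argument. The key structural fact to exploit is that $K^{(m)}K_n$ is a compositum of an unramified extension with the $n$-th layer of the cyclotomic $\Zp$-extension of $K$; in particular $K^{(m)}K_n / K$ is abelian with Galois group a quotient of $\Zp \times \Zp$, and the only ramification comes from the cyclotomic part, which is totally (tamely for $n$ small, wildly in general) ramified of $p$-power degree.

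\begin{proof}
First I would reduce to a statement about torsion points of the formal group over a sufficiently large field. Since $\widehat{E}$ has height $2$ (good supersingular reduction), the $p$-torsion $\widehat{E}[p]$ consists of points generating a ramified extension of $\Qp$. More precisely, recall that for a supersingular elliptic curve over $\Qp$ with $a_p = 0$, the formal group $\widehat{E}$ is a Lubin--Tate-type formal group (up to isomorphism over $\overline{\Zp}$) attached to the uniformizer relation, and the extension $\Qp(\widehat{E}[p])/\Qp$ has ramification index divisible by $p^2 - 1$, in particular it contains a primitive $(p^2-1)$-st root of unity and is ramified. The point is that any nonzero $p$-torsion point $x \in \widehat{E}(K^{(m)}K_n)$ would give $K(x) \subseteq K^{(m)}K_n$, and $K(x)/K$ would be an extension whose ramification is forced to be incompatible with that of $K^{(m)}K_n/K$.

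The main step is the ramification bookkeeping. On one hand, $K^{(m)}K_n/K$ is abelian, its inertia subgroup is cyclic of order dividing $p^n$ (coming entirely from $K_n/K$, since $K^{(m)}/K$ is unramified), and its residue field is the degree-$p^m$ unramified extension of the residue field of $K$. On the other hand, a nonzero $x \in \widehat{E}[p]$ satisfies $[p](x) = 0$ where $[p](T) \equiv T^{p^2} \pmod{p, \deg < p^2}$ for the height-$2$ formal group with $a_p=0$ (after the standard change of variables), so $v(x)$ is a rational number with denominator divisible by $p^2 - 1$; hence $K(x)$ has ramification index over $\Qp$ divisible by $p^2 - 1 > 1$, which in particular forces the residue field of $K(x)$ to contain $\mathbb{F}_{p^2}$ after accounting for the unramified part, OR forces tame ramification of order divisible by $p^2 - 1$. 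Either way this is incompatible with $K^{(m)}K_n/K$: the tame part of the inertia of $K^{(m)}K_n/\Qp$ equals the tame part of inertia of $K/\Qp$ (as $K/\Qp$ is unramified and $K^{(m)}K_n/K$ has pro-$p$ inertia), so no new tame ramification of order $p^2-1$ can appear. This contradiction shows $\widehat{E}[p] \cap \widehat{E}(K^{(m)}K_n) = 0$.

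The ``in particular'' clause then follows immediately: from the short exact sequence of $\Gal$-modules $0 \to \widehat{E}(\mathfrak{m}_L) \to E(L) \to \widetilde{E}(\mathbb{F}_L) \to 0$ with $L = K^{(m)}K_n$ and residue field $\mathbb{F}_L$ the degree-$p^m$ unramified extension of the residue field of $K$, any $p$-torsion point of $E(L)$ maps to a $p$-torsion point of $\widetilde{E}(\mathbb{F}_L)$; but $\widetilde{E}$ is supersingular over $\mathbb{F}_p$, so $\widetilde{E}(\mathbb{F}_{p^m})$ has order prime to $p$ for every $m$ (its order is $p^m + 1 - a_{p^m}$ with the relevant Frobenius eigenvalues being $\pm\sqrt{p}$, giving $p \nmid |\widetilde{E}(\mathbb{F}_{p^m})|$), hence the reduction of a $p$-torsion point is trivial, so it lies in $\widehat{E}(\mathfrak{m}_L)$, which we have just shown has no $p$-torsion.

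\textbf{Expected main obstacle.} The delicate point is making the ramification argument airtight across all the cases of $(m,n)$ simultaneously, and in particular handling the fact that $\widehat{E}$ is only a Lubin--Tate formal group after base change / isomorphism over the ring of integers of a ramified extension, so one must be careful that statements about $v(x)$ for $x \in \widehat{E}[p]$ are isomorphism-invariant (they are, since a formal group isomorphism over $\overline{\Zp}$ has linear term a unit, preserving valuations). An alternative, perhaps cleaner, route that avoids explicit Lubin--Tate theory is to use the crystalline / Honda theory description of $\widehat{E}$ together with the fact that $\widehat{E}(\mathfrak{m}_L)$ for $L/\Qp$ with $e(L/\Qp)$ prime to $p$ and small has no $p$-torsion, and then propagate up the cyclotomic tower by a norm-compatibility or a direct $[p]$-divisibility estimate; I would present whichever is shortest given the tools already available in the cited literature (this lemma is standard and appears, e.g., in Kobayashi's and Kim's work, so a short proof citing the Lubin--Tate structure should suffice).
\end{proof}
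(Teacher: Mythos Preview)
Your approach is essentially that of the paper: for the first assertion the paper simply cites \cite[Proposition 3.1]{KO} and \cite[Proposition 8.7]{Kob}, whose proofs are exactly the valuation/ramification argument you sketch, and for the second assertion the paper uses the identical short exact sequence $0\to \widehat{E}(K^{(m)}K_n)\to E(K^{(m)}K_n)\to \widetilde{E}(k_{m,n})\to 0$. Your argument is correct; the only cleanup needed is to drop the confused ``OR'' clause about residue fields --- the point is simply that height $2$ forces every coefficient of $[p](T)$ in degree $<p^2$ to lie in $p\Zp$, so the Newton polygon gives $v(x)=1/(p^2-1)$ for any nonzero $x\in\widehat{E}[p]$, hence $\Qp(x)/\Qp$ has ramification index divisible by $p^2-1$, which is impossible inside $K^{(m)}K_n$ since $e(K^{(m)}K_n/\Qp)=p^n$.
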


\begin{proof}
The first assertion is \cite[Proposition 3.1]{KO} or \cite[Proposition 8.7]{Kob}. For the second assertion, consider the following short exact sequence
\[ 0\lra \widehat{E}(K^{(m)}K_n)\lra E(K^{(m)}K_n) \lra \widetilde{E}(k_{m,n})\lra 0,\]
where $k_{m,n}$ is the residue field of $K^{(m)}K_n$. Since $\widetilde{E}(k_{m,n})$ has no $p$-torsion by our assumption that $E$ has good supersingular reduction, the second assertion follows from the first assertion.
\end{proof}

Following \cite{Kim07,Kim08,Kim14,KO,Kob,LLAk}, we define
the following plus/minus norm groups.

\begin{defn} \label{plus minus defn}
\[\widehat{E}^+(K^{(m)}K_n) = \left\{ P\in \widehat{E}(K^{(m)}K_n)~:~\mathrm{tr}_{n/l+1}(P)\in E(K^{(m)}K_{l}), 2\mid l, 0\leq l \leq n-1\right\}, \]
\[\widehat{E}^-(K^{(m)}K_n) = \left\{ P\in \widehat{E}(K^{(m)}K_n)~:~\mathrm{tr}_{n/l+1}(P)\in E(K^{(m)}K_{l}), 2\nmid l, 0\leq l \leq n-1\right\}, \]
where $\mathrm{tr}_{n/l+1}: \widehat{E}(K^{(m)}K_n) \lra \widehat{E}(K^{(m)}K_{l+1})$ denotes the trace map with respect to the formal group law of $\widehat{E}$.
\end{defn}

For the rest of this subsection, we write $L_{\infty} = \cup_{m,n\ge0}K^{(m)}K_n$. Note that $G:=\Gal(L_{\infty}/K)\cong \Zp^2$. Denote by $G_n$ the subgroup $G^{p^n}$ of $G$ and set $L_n$  to be the fixed field of $G_n$. In particular, one has $\Gal(L_n/K)\cong \Z/p^n\times \Z/p^n$.

Write $\mathbb{H}^{\pm}_{\infty}= \hat{E}^{\pm}(L_{\infty})\ot\Qp/\Zp$ and $\mathbb{H}^{\pm}_n=\big(\mathbb{H}^{\pm}_{\infty}\big)^{G_n}$. By \cite[Lemma 8.17]{Kob}, the group $\mathbb{H}^{\pm}_{\infty}$ injects into $H^1(L_\infty,\Ep)$ via the Kummer map. In view of Lemma \ref{supersingular points}, it follows from the Hochshild-Serre spectral sequence that there is an isomorphism
\[ H^1(L_n,\Ep)\cong H^1(L_{\infty},\Ep)^{G_n}.\]
Thus, $\mathbb{H}^{\pm}_n$ can be viewed as a subgroup of $H^1(L_n,\Ep)$ via this isomorphism. By Lemma \ref{supersingular points} again, we have an identification
\[ H^1(L_n,E[p^m])\cong H^1(L_n,\Ep)[p^m]\]
which further enables us to view $\mathbb{H}^{\pm}_n[p^m]$ as a subgroup of $H^1(L_n,E[p^m])$. Under these observations, we may now state the following result of Kim.

\bp[Kim] \label{Kim duality}
The group $\mathbb{H}^{-}_n[p^m]$ is the exact annihilator of itself with respect to the local Tate pairing
\[ H^1(L_n, E[p^m])\times H^1(L_n, E[p^m])\lra \Z/p^m. \]
If $|K:\Qp|$ is not divisible by $4$, we also have the same conclusion for $\mathbb{H}^{+}_n[p^m]$.
\ep

\bpf
See \cite[Proposition 3.15]{Kim07} and \cite[Theorem 2.9]{Kim14}.
\epf

\section{Signed Selmer groups} \label{Selmer}

For the remainder of the paper, $E$ will denote an elliptic curve defined over a number field $F'$. Let $F$ be a finite extension of $F'$.
We introduce the following axiomatic conditions which will be in force throughout the paper.

\begin{itemize}
\item[\textbf{(S1)}]  The elliptic curve $E$ has good reduction at all primes of $F'$ above $p$, and at least one of which is a supersingular reduction prime of $E$.

 \item[\textbf{(S2)}] For each prime $u$ of $F'$ above $p$ at which $E$ has good supersingular reduction, we always have

 (a) $F'_u\cong\Qp$ and $u$ is unramified in $F/F'$;

 (b) $a_u = 1 + p - |\widetilde{E}_u(\mathbb{F}_p)| = 0$, where $\widetilde{E}_u$ is the reduction of $E$ at $u$.
\end{itemize}

From now on, we fix a finite set $\Si$ of primes of $F$ which contains all the primes above $p$, all the ramified primes of $F/F'$, the bad reduction primes of $E$ and the archimedean primes. Let $F_\Si$ denote the maximal algebraic extension of $F$ which is unramified outside $\Si$. For every extension $L$ of $F$ contained in $F_\Si$, we write $G_\Si(L)=\Gal(F_\Si/L)$. Denote by $\Sss$ the set of primes of $F$ above $p$ at which $E$ has good supersingular reduction. By (S1), the set $\Sss$ is nonempty. For any subset $S$ of $\Si$ and any extension $L$ of $F$, we shall write $S(L)$ for the set of primes of $L$ above $S$.

Let $F_{\infty}$ be a $\Zp^d$-extension of $F$ which always satisfies the following hypothesis.

\begin{itemize}
\item[\textbf{(S3)}] The field $F_{\infty}$ contains the cyclotomic $\Zp$-extension $F^{\cyc}$, and has the property that every $w\in\Sss(F^{\cyc})$ is unramified in $F_{\infty}/F^{\cyc}$.
\end{itemize}

Note that $F_\infty\subseteq F_\Si$ (cf.\ \cite[Theorem 1]{Iw73}). Write $\Ga=\Gal(F_{\infty}/F)$ ($\cong \Zp^d$). Let $F_n$ be the unique subextension of $F_{\infty}/F$ such that $\Gal(F_n/F)\cong (\Z/p^n)^d$. Let $\s=(s_v)_{v\in \Sss}\in\{+,-\}^{\Sss}$ be a fixed choice of signs. For each $w\in \Sss(F_n)$, we set $s_w = s_v$, where $v$ is the prime of $F$ below $w$. By (S3), $F_{n,w}$ is the compositum of a subextension of the cyclotomic $\Zp$-extension of $F_v$ and a subextension of the unramified $\Zp$-extension of $F_v$. Hence we can define $\widehat{E}^{s_w}(F_{n,w})$ as in Definition \ref{plus minus defn}.

\begin{defn}
For $\s=(s_v)_{v\in \Sss}\in\{+,-\}^{\Sss}$, the signed Selmer group $\Sel^{\s}(E/F_n)$ is then defined by
\[ \ker \left(H^1(G_\Sigma(F_n),\Ep)\lra \bigoplus_{w\in \Sss(F_n)}\frac{H^1(F_{n,w},\Ep)}{\widehat{E}^{s_w}(F_{n,w})\otimes\Qp/\Zp}\times\bigoplus_{w\in \Sigma(F_n)\setminus \Sss(F_n)}\frac{H^1(F_{n,w},\Ep)}{E(F_{n,w})\otimes\Qp/\Zp} \right).
\]
We set $\Sel^{\s}(E/F_{\infty}) = \ilim_n\Sel^{\s}(E/F_n)$ and write $X^{\s}(E/F_\infty)$ for its Pontryagin dual.
\end{defn}

The module $X^{\s}(E/F_\infty)$ is finitely generated over $\Zp\ps{\Ga}$. One generally expects a stronger assertion on the structure of this module.

\begin{conjecture}\label{conj}
For all choices of $\s\in\{+,-\}^{\Sss}$, the Selmer group $X^{\s}(E/F_{\infty})$ is torsion over $\Zp\ps{\Ga}$.
\end{conjecture}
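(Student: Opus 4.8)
\medskip
\noindent\textbf{Towards a proof.}
The plan is to descend the assertion to the cyclotomic line, where it is known, by combining a control theorem with a dimension count in the Iwasawa algebra. Put $\Ga' = \Gal(F_\infty/F^{\cyc})$, so that $\Ga'\cong\Zp^{d-1}$, and let $I'\subseteq\Zp\ps{\Ga}$ be the ideal with $\Zp\ps{\Ga}/I'\cong\La_{\cyc}:=\Zp\ps{\Gal(F^{\cyc}/F)}$; being the augmentation ideal of $\Ga'$, it is generated by $d-1$ elements. Since $\Zp\ps{\Ga}$ is a regular local ring of Krull dimension $d+1$, any finitely generated module $M$ satisfies
\[
 \dim_{\Zp\ps{\Ga}}M \ \le\ \dim_{\La_{\cyc}}(M/I'M) + (d-1),
\]
so if $M/I'M$ is $\La_{\cyc}$-torsion (i.e.\ has dimension $\le 1$), then $\dim_{\Zp\ps{\Ga}}M\le d$ and $M$ is $\Zp\ps{\Ga}$-torsion. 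Applying this to $M=X^{\s}(E/F_\infty)$, which is finitely generated over $\Zp\ps{\Ga}$, we are reduced to showing that $X^{\s}(E/F_\infty)_{\Ga'}=M/I'M$, or any module differing from it by a finite group, is $\La_{\cyc}$-torsion.

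First I would establish a control theorem asserting that the restriction map
\[
 \Sel^{\s}(E/F^{\cyc})\ \lra\ \Sel^{\s}(E/F_\infty)^{\Ga'}
\]
has finite kernel and cokernel. The key input is that $E(F_\infty)[p^\infty]=0$: by (S1) there is a prime of $F'$ above $p$ with good supersingular reduction, and by (S2)--(S3) any prime $w$ of $F_\infty$ above it has $F_{\infty,w}$ contained in the $\Zp^2$-extension $\bigcup_{m,n}K^{(m)}K_n$ of $\Qp$ studied in Section \ref{elliptic curve over local}, so Lemma \ref{supersingular points} gives $E(F_{\infty,w})[p^\infty]=0$, whence $E(F_\infty)[p^\infty]=0$. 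Therefore $H^0(G_\Sigma(L),\Ep)=0$ for every $L$ with $F\subseteq L\subseteq F_\infty$, and inflation--restriction gives $H^1(G_\Sigma(F^{\cyc}),\Ep)\cong H^1(G_\Sigma(F_\infty),\Ep)^{\Ga'}$, as well as $H^1(F^{\cyc}_w,\Ep)\cong H^1(F_{\infty,w},\Ep)^{\Ga'_w}$ at each $w\in\Sss(F^{\cyc})$. A diagram chase then reduces the problem to controlling the local conditions place by place, and the only delicate case is at $w\in\Sss(F^{\cyc})$: using (S3), which forces $F_{\infty,w}/F^{\cyc}_w$ to be unramified and hence to lie within the reach of Kim's local theory \cite{Kim14}, one must verify that
\[
 \wE^{s_w}(F^{\cyc}_w)\ot\Qp/\Zp\ \lra\ \big(\wE^{s_w}(F_{\infty,w})\ot\Qp/\Zp\big)^{\Ga'_w}
\]
has finite cokernel. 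Dualizing, $X^{\s}(E/F_\infty)_{\Ga'}$ differs from $X^{\s}(E/F^{\cyc})$ by finite groups, so it remains to prove that $X^{\s}(E/F^{\cyc})$ is $\La_{\cyc}$-torsion.

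The cyclotomic statement is essentially known: for $F=\Q$ and a single supersingular prime it is Kobayashi's theorem \cite{Kob}, and its extension to the axiomatic setting (S1)--(S2) goes through by the same Euler-characteristic argument (cf.\ \cite{Kim07,KO}). Concretely, one computes $\corank_{\Zp}\Sel^{\s}(E/F^{\cyc}_n)$ at each finite layer $F^{\cyc}_n$ of $F^{\cyc}/F$ from the Poitou--Tate exact sequence, in terms of the global term (whose $\Zp$-corank is determined by the global Euler--Poincar\'e formula, using $H^0=0$), the ordinary local quotients at the remaining primes above $p$, and the signed local quotients at $\Sss$. The crucial point is that, by (S2), the corank $\corank_{\Zp}\big(\wE^{s_w}(F^{\cyc}_{n,w})\ot\Qp/\Zp\big)$ is exactly the one Kobayashi computed over $\Q^{\cyc}$, namely ``half'' of $\corank_{\Zp}H^1(F^{\cyc}_{n,w},\Ep)$ up to a bounded error; tracking the growth of all these quantities as $n\to\infty$, together with the vanishing of the relevant fine-Selmer/$H^2$-contribution, forces $\corank_{\La_{\cyc}}X^{\s}(E/F^{\cyc})=0$.

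The hard part will be the control theorem of the second step, specifically the finiteness of the cokernel of $\wE^{s_w}(F^{\cyc}_w)\ot\Qp/\Zp\to\big(\wE^{s_w}(F_{\infty,w})\ot\Qp/\Zp\big)^{\Ga'_w}$ in the unramified direction: unlike the ordinary local conditions, the plus/minus norm groups do not descend transparently under taking Galois invariants, and one genuinely needs the fine Iwasawa-module structure of Kim's construction (and, for sharp error terms, the orthogonality of Proposition \ref{Kim duality}) rather than merely the defining trace-compatibility. A secondary point, harmless for torsion-ness, is that the Euler-characteristic approach gives no information on the $\mu$-invariant of $X^{\s}(E/F^{\cyc})$, which may well be positive.
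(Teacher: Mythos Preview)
The statement you are trying to prove is labeled a \emph{Conjecture} in the paper, and the paper does not prove it; indeed the main theorem takes it as a hypothesis. So there is no paper proof to compare against. Your proposal, if it worked, would resolve an open problem in the generality of (S1)--(S3).

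The genuine gap is in your third step. The claim that $X^{\s}(E/F^{\cyc})$ is $\La_{\cyc}$-torsion is \emph{not} ``essentially known'' for an arbitrary number field $F$: Kobayashi's theorem is for $F=\Q$, and its proof does not proceed by an Euler-characteristic/Poitou--Tate count but by constructing nonzero elements via Kato's Euler system, which has no analogue for general $F$. Your proposed corank computation hides the difficulty in the phrase ``the vanishing of the relevant fine-Selmer/$H^2$-contribution'': that vanishing is precisely the weak Leopoldt conjecture for $E[p^\infty]$ over $F^{\cyc}$ (equivalently, cotorsion of the fine Selmer group), which is open for general $F$ and is in fact essentially equivalent in strength to the cyclotomic case of the very conjecture you are trying to prove. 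Without an Euler-system input or some other deep arithmetic ingredient, the Poitou--Tate bookkeeping only yields
\[
 2\,\corank_{\La_{\cyc}}\Sel^{\s}(E/F^{\cyc})\ =\ \corank_{\La_{\cyc}}H^2(G_\Si(F^{\cyc}),\Ep),
\]
and you have no way to show the right-hand side is zero.

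Your first two steps (the dimension inequality over $\Zp\ps{\Ga}$, and a control theorem from $F^{\cyc}$ to $F_\infty$ for the signed Selmer groups) are reasonable and essentially appear in the literature (cf.\ \cite{LLAk}); they show that the multi-variable conjecture follows from the cyclotomic one. But the cyclotomic case itself remains conjectural in this generality, so the argument cannot be completed.
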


Over the cyclotomic $\Zp$-extension, when $E$ has good ordinary reduction at all primes above $p$, the above conjecture is precisely Mazur's conjecture in \cite{Maz}, which is known to hold in the case when $E$ is defined over $\Q$ and $F$ an abelian extension of $\Q$ (see \cite{K}). For an elliptic curve over $\Q$ with good supersingular reduction at $p$, this conjecture has been proven to be true by Kobayashi \cite{Kob};  see also \cite{BL, BL2} for a generalization of this conjecture for abelian varieties. Over a multiple $\Zp$-extension,
the above conjecture is a natural extension of Mazur's conjecture (for instance, see \cite{OcV03}). When $E$ has supersingular reduction, this was studied in \cite{KimPark, LLAk, LeiP, LeiS}.

We introduce one last hypothesis.

\begin{itemize}
\item[\textbf{(S4)}] For our fixed choice of $\s$, we have $4 \nmid |F_v :\Qp|$ whenever $s_v =+$.
\end{itemize}

For a $\Zp\ps{\Ga}$-module $M$, we write $M^\iota$ for the $\La$-module which is $M$ as $\Zp$-module with a $\Ga$-action given by $\ga\cdot_{\iota} x = \ga^{-1}x$ for $\ga\in\Ga$ and $x\in M$. If $M$ and $N$ are two torsion $\Zp\ps{\Ga}$-modules which are pseudo-isomorphic to each other, we write $M\sim N$.

We can now state our main result.

\bt \label{main theorem}
 Suppose that $(S1)-(S4)$ are valid. Assume that $X^{\s}(E/F_\infty)$ is torsion over $\Zp\ps{\Ga}$ for our fixed choice of $\s$. Then we have a pseudo-isomorphism
\[ X^{\s}(E/F_\infty) \sim X^{\s}(E/F_\infty)^{\iota}.\]
In particular, the characteristic ideal of $X^{\s}(E/F_\infty)$ coincides with the characteristic ideal of $X^{\s}(E/F_\infty)^{\iota}$.
\et

The proof of the theorem will be given in Section \ref{Proof sec}. As mentioned in the introduction, in proving our main result, we need to work with the so-called strict signed Selmer groups which we shall recall here. The strict signed Selmer group of $E$ over $F_n$ is given by
\[
    \Sel^{\overrightarrow{s},str}(E/F_n):=\ker\Bigg(H^1(G_{\Sigma}(F_n),\Ep)
    \lra \bigoplus_{w\in\Si(F_n)}\frac{H^1(F_{n,w},\Ep)}
    {C_w\Big(\Ep/F_n\Big)}\Bigg),\]
where $C_w\Big(\Ep/F_n\Big)$ is defined as follow.

\begin{itemize}
\item[$(1)$] Suppose that $w$ divides $p$ and is a good supersingular prime of $E$. We take
    \[C_w\Big(\Ep/F_n\Big):= \Big(\bigoplus_{x|w}E^{s_x}(F_{\infty,x})\ot\Qp/\Zp\Big)^{\Ga_n}, \]
    where $x$ runs through all the primes of $F_{\infty}$ above $w$.

\item[$(2)$] If $w$ divides $p$ and is a good ordinary reduction prime of $E$, we set $C_w\Big(\Ep/F_n\Big)$ to be
\[\Bigg(\im\left(H^1(F_{n,w},\widehat{E}[p^{\infty}])\lra H^1(F_{n,w}, \Ep)\right)\Bigg)_{div}.\]

\item[$(3)$] If $w$ does not divide $p$, then $C_w\Big(\Ep/F_n\Big)$ is simply defined to be
the kernel of the map
\[ H^1(F_{n,w}, \Ep) \lra H^1(F_{n,w}^{ur}, \Ep),\]
where $F_{n,w}^{ur}$ is the maximal unramified extension of $F_{n,w}$.
\end{itemize}

Define $\Sel^{\overrightarrow{s},str}(E/F_\infty):=\ilim_n\Sel^{\overrightarrow{s},str}(E/F_n)$.
Note that in the case when $\Sss=\emptyset$, this is the strict Selmer group as defined in the sense of Greenberg \cite{G89}. Our choice of naming the above as the strict signed Selmer group is inspired by this observation.
In general, the strict signed Selmer group needs not agree with the signed Selmer group on the intermediate level $F_n$. But they do coincide upon taking limit and we record this fact here.

\bp \label{strict = usual}
 We have an identification $\Sel^{\overrightarrow{s},str}(E/F_\infty) \cong \Sel^{\overrightarrow{s}}(E/F_\infty)$.
\ep

\bpf
 This has a similar proof to that in \cite[Proposition 4.3]{AL3}.
\epf

Here, and subsequently, we shall write $S_{div}(E/F_n)$ for the $p$-divisible part of $\Sel^{\s,str}(E/F_n)$. In other words, $S_{div}(E/F_n)$ is the maximal $p$-divisible subgroup of $\Sel^{\s,str}(E/F_n)$. The main reason for working with the strict signed Selmer groups lies in the following proposition which will play a crucial role in the proof of Theorem \ref{main theorem}.

\bp \label{Flach}
Suppose that $(S1)-(S4)$ are valid.
There is a perfect pairing
\[ \Sel^{\s,str}(E/F_n)/S_{div}(E/F_n)\times \Sel^{\s,str}(E/F_n)/S_{div}(E/F_n) \lra \Qp/\Zp.\]
\ep

\bpf
By virtue of our assumption $(S4)$, we may apply Proposition \ref{Kim duality} to conclude that the local condition at each supersingular prime is its own annihilator under the Tate pairing. For the local conditions at the remaining primes, such properties are well-known (for instance, see \cite{Guo} or \cite{AL3}). Hence we may apply the main result of Flach \cite{Fl} to obtain the perfect pairing in the proposition.
\epf

We end the section with one more useful observation.

\bl \label{control kernel}
Suppose that $(S1)-(S3)$ are valid.
 For every $n$, the restriction map
\[\Sel^{\overrightarrow{s},str}(E/F_n) \lra\Sel^{\overrightarrow{s}}(E/F_\infty)^{\Ga_n}\]
is an injection, where $\Ga_n = \Gal(F_\infty/F_n)$.
\el

\bpf
A standard diagram chasing argument shows that the kernel of the restriction map is contained in the kernel of the following restriction map
\[H^1(G_\Si(F_n),\Ep)\lra H^1(G_\Si(F_\infty),\Ep)^{\Ga_n}\]
on cohomology, which is precisely given by $H^1(\Ga_n, E(F_{\infty})[p^\infty])$. On the other hand, by Lemma \ref{supersingular points}, we have $E(F_{\infty,w})[p^\infty]=0$ for each $w\in \Sss(F_\infty)$. It follows from this that $E(F_{\infty})[p^\infty]=0$ and so the kernel is trivial which is what we want to show.
\epf

\section{Proof of the main theorem} \label{Proof sec}

As before, $\Ga = \Gal(F_{\infty}/F)\cong\Zp^d$ for some $d\geq 2$. We shall write $\La$ for the Iwasawa algebra $\Zp\ps{\Ga}$. For a finitely generated torsion $\La$-module $M$, we have a pseudo-isomorphism
\[ \bigoplus_i \La/\xi_i^{r_i} \lra M,\]
where $\xi_i$ are irreducible elements in $\La$ (cf. \cite[\S 5]{NSW}). We shall write
\[ [M] = \bigoplus_i \La/\xi_i^{r_i}.\]
Note that the module $[M]$ is uniquely determined by $M$ (up
to isomorphism).

Recall that for a $\Zp\ps{\Ga}$-module $M$, we write $M^\iota$ for the $\La$-module which is $M$ as $\Zp$-module with a $\Ga$-action given by $\ga\cdot_{\iota} x = \ga^{-1}x$ for $\ga\in\Ga$ and $x\in M$. Then one easily sees that
\[ [M^{\iota}] = [M]^{\iota}. \]

In view of this, Theorem \ref{main theorem} is equivalent to saying that
\[ [X^{\s}(E/F_\infty)]^{\iota} = [X^{\s}(E/F_\infty)]. \]

\subsection{First reduction} \label{first reduction}

Write $\mu_{p^{\infty}}$ for the group of all $p$-power roots of unity.
Following \cite{LLTT}, we say that $f \in\La$ is a simple element if there exist $\ga\in \Ga-\Ga^p$ and $\ze \in \mu_{p^{\infty}}$ such
that $f = f_{\ga, \ze}$, where
\[f_{\ga,\ze} := \prod_{\si\in \Gal(\Qp(\mu_p)/\Qp)}\big(\ga- \si(\ze)\big).\]

Let $M$ be a finitely generated $\La$-torsion module. We define $[M]_{si}$ to be the
the sum over those $\xi_i$ in $[M]$ which are simple and $[M]_{ns}$ as its complement. In particular, we have $[M] = [M]_{si}\oplus [M]_{ns}$.
The simple component enjoys the following property.

\bl \label{First observation}
 Let $M$ be a finitely generated torsion $\La$-module.
 Then we have
 \[ [M]^{\iota}_{si} = [M]_{si}.\]
\el

\bpf
See \cite[pp 1930, Formula (9)]{LLTT}.
\epf

In view of the preceding lemma and the remark before Subsection \ref{first reduction}, the proof of Theorem \ref{main theorem} is reduced to showing that the non-simple component of $X^{\s}(E/F_\infty)$ is invariant under $\iota$, or in other words,
\[ [X^{\s}(E/F_\infty)]^{\iota}_{ns} = [X^{\s}(E/F_\infty)]_{ns}. \]

To verify this latter relation, we make use of the theory of $\Ga$-system as developed by Lai-Longhi-Tan-Trihan in \cite{LLTT} which we now recall.

\subsection{$\Ga$-system} \label{Ga-system subsec}

Here, we will only collect the properties of a $\Ga$-system required for the proof of our theorem. For a more in-depth discussion, we refer readers to the paper \cite{LLTT}; see especially Section 3 of the said paper.

\bd
Consider a collection
\[ \mathfrak{A} = \big\{ \mathfrak{a}_n, \mathfrak{b}_n, \langle~,~\rangle_n, r^n_m, c^n_m ~\big|~ n\geq m \geq 0 \big\},\]
which satisfies all of the following properties.

\begin{itemize}
\item[($\Ga$-1)] For every $n$, $\mathfrak{a}_n$ and $\mathfrak{b}_n$ are finite $\Zp$-modules with an action of $\Ga$ factoring through $\Zp[\Ga/\Ga^{p^n}]$.

\item[($\Ga$-2)] For every $n\geq m$, the maps
\[ \ba{c}
 r^n_m: \mathfrak{a}_m\times \mathfrak{b}_m \lra \mathfrak{a}_n\times \mathfrak{b}_n \\
 c^n_m: \mathfrak{a}_n\times \mathfrak{b}_n \lra \mathfrak{a}_m\times \mathfrak{b}_m
\ea\] are $\La$-morphisms such that $r^n_m(\mathfrak{a}_m)\subseteq \mathfrak{a}_n$, $r^n_m(\mathfrak{b}_m)\subseteq \mathfrak{b}_n$, $c^n_m(\mathfrak{a}_n)\subseteq \mathfrak{a}_m$  and $c^n_m(\mathfrak{b}_n)\subseteq \mathfrak{b}_m$ with $r^n_n= c^n_n = \mathrm{id}$. Also, $\{\mathfrak{a}_n\times \mathfrak{b}_n, r^n_m\}_n$ forms an inductive system and $\{\mathfrak{a}_n\times \mathfrak{b}_n, c^n_m\}_n$ forms a projective system.

\item[($\Ga$-3)] The composition $r^n_m\circ c^n_m$ coincides with the norm map associated with $\Gal(F_n/F_m)$ and the composition $c^n_m\circ r^n_m$ coincides with multiplication by $p^{n-m}$.

\item[($\Ga$-4)] For each $n$, $\langle~,~\rangle_n: \mathfrak{a}_n\times\mathfrak{b}_n \lra \Qp/\Zp$ is a perfect pairing which is compatible with the $\Ga$-action and the maps $r^n_m$ and $c^n_m$. More precisely, we have
    \[ \langle \ga a, \ga b\rangle_n = \langle a, b\rangle_n ~\mbox{ for every }\ga\in \Ga, \]
 \[ \langle a_n, r^n_m(b_m)\rangle_n = \langle c^n_m(a_n), b_m\rangle_m ~\mbox{ for every } a_n\in \mathfrak{a}_n, b_m\in\mathfrak{b}_m, \]
 and
  \[ \langle  r^n_m(a_m), b_n\rangle_n = \langle a_m, c^n_m(b_n)\rangle_m ~\mbox{ for every }a_m\in \mathfrak{a}_m, b_n\in\mathfrak{b}_n. \]
\end{itemize}
Write $\mathfrak{a}= \plim_n\mathfrak{a}_n$ and $\mathfrak{b}= \plim_n\mathfrak{b}_n$, where the transition maps are given by $c^n_m$. We then say that $\mathfrak{A}$ is a \textbf{$\Ga$-system} if both
$\mathfrak{a}$ and $\mathfrak{b}$ are finitely generated torsion $\La$-modules.
\ed

\br
We say a little on comparing our notation with that in \cite{LLTT}. Our $r^n_m$ here is $\mathfrak{r}^n_m$ there and $c^n_m$ here is $\mathfrak{k}^n_m$ there. The reason behind our choice of notation here is because for our application, our maps $r^n_m$ (resp. $c^n_m$) are induced by restriction maps on cohomology (resp., corestriction maps on cohomology).
\er

Let $\mathfrak{A}$ be a $\Ga$-system. By ($\Ga$-2), we can define the inductive limit $\ilim_m\mathfrak{a}_m$, whose transition maps are given by $r^n_m$. Note that by ($\Ga$-4), the Pontryagin dual of this inductive limit coincides with $\mathfrak{b}$.

Now, denote by $r_n$ the natural morphism $\mathfrak{a}_n \lra \ilim_m\mathfrak{a}_m$, whose kernel is in turn denoted by $\mathfrak{a}_n^0$. The module $\mathfrak{b}_n^0$ is defined similarly. Write $\mathfrak{a}_n^1$ (resp., $\mathfrak{b}_n^1$) for the annihilator of $\mathfrak{b}_n^0$ (resp., $\mathfrak{a}_n^0$) with respect to the perfect pairing $\langle~,~\rangle_n$. Define $\mathfrak{a}_n'$ to be the image of $\mathfrak{a}_n^1$ under the natural quotient map $\mathfrak{a}_n\lra \mathfrak{a}_n/\mathfrak{a}_n^0$. The module $\mathfrak{b}_n'$ is defined similarly. One then sets $\mathfrak{a}^i= \plim_n\mathfrak{a}^i_n$, $\mathfrak{b}^i= \plim_n\mathfrak{b}^i_n$ ($i=0,1)$, $\mathfrak{a}'= \plim_n\mathfrak{a}'_n$ and $\mathfrak{b}'= \plim_n\mathfrak{b}'_n$, where the transition maps are induced by $c^n_m$. The connections between these modules are recorded in the following lemma.

\bl \label{ses LLTT}
The following statements are valid.
\begin{enumerate}
\item[$(a)$]
We have isomorphisms $\ilim_n \mathfrak{b}_n/\mathfrak{b}^0_n\cong (\mathfrak{a}^1)^{\vee}$ and $\ilim_n \mathfrak{a}_n/\mathfrak{a}^0_n\cong (\mathfrak{b}^1)^{\vee}$. Here $(~~)^{\vee}$ is the Pontryagin dual.
\item[$(b)$] There are short exact sequence of $\La$-modules
\[ 0\lra \mathfrak{a}^0 \lra \mathfrak{a} \lra \mathfrak{a}'\lra 0,\]
\[ 0\lra \mathfrak{b}^0 \lra \mathfrak{b} \lra \mathfrak{b}'\lra 0.\]
\end{enumerate}\el

\bpf
(a) Since $\mathfrak{a}_n^1$ is the annihilator of $\mathfrak{b}_n^0$ with respect to the perfect pairing $\langle~,~\rangle_n$, we have $\mathfrak{b}_n/\mathfrak{b}^0_n\cong (\mathfrak{a}_n^1)^{\vee}$. Taking direct limit and noting ($\Ga$-4), we obtain the first isomorphism. The second isomorphism can be proved similarly.

(b) It suffices to establish the first short exact sequence, the second being similar. Firstly, note that we have the following short exact sequences
\[ 0\lra \mathfrak{a}^0_n \lra \mathfrak{a}^1_n + \mathfrak{a}^0_n \lra \mathfrak{a}'_n\lra 0,\]
\[ 0\lra \mathfrak{b}^0_n \lra \mathfrak{b}_n \lra \mathfrak{b}_n/\mathfrak{b}^0_n\lra 0\]
of $\La$-modules.
Since, one clearly has $\ilim_n\mathfrak{b}_n^0 =0$, it follows from the second exact sequence that $\ilim \mathfrak{b}_n \cong \ilim \mathfrak{b}_n/\mathfrak{b}^0_n$. Upon taking dual and noting (a), we obtain the identification $\mathfrak{a} \cong \mathfrak{a}^1$. It follows from this that upon taking the inverse limit of the first exact sequence, we have
\[ 0\lra \mathfrak{a}^0 \lra \mathfrak{a} \lra \mathfrak{a}'\lra 0,\]
and this completes the proof of the lemma. \epf

The significance of the modules $\mathfrak{a}'$ and $\mathfrak{b}'$ lies in the following result which will be used in the proof of our main
theorem.

\bp[Lai-Longhi-Tan-Trihan] \label{LLTT}
Let $\mathfrak{A}$ be a $\Ga$-system. Then we have
\[ [\mathfrak{a}']^{\iota}_{ns} = [\mathfrak{b}']_{ns}.\]
\ep

\bpf
See \cite[Corollary 3.3.4]{LLTT}.
\epf

\subsection{Finishing the proof}

We return to the setting in Section \ref{Selmer}. In particular, we retain the notation from there. Recall that $S_{div}(E/F_n)$ denotes the $p$-divisible part of $\Sel^{\s,str}(E/F_n)$. We set $\mathfrak{a}_n(E) = \mathfrak{b}_n(E) = \Sel^{\s,str}(E/F_n)/S_{div}(E/F_n)$. Write $S_{div}(E/F_\infty) = \ilim_n \Sel_{div}(E/F_n)$. The limit of these groups sit in the following short exact sequence
\[ 0\lra S_{div}(E/F_\infty) \lra \Sel^{\s}(E/F_\infty) \lra \ilim_n \mathfrak{a}_n(E)\lra 0, \]
where we have identified $\Sel^{\s,str}(E/F_\infty) \cong \Sel^{\s}(E/F_\infty)$ (cf. Lemma \ref{strict = usual}).

The morphisms $r^n_m(E)$ are induced by the restriction maps on the signed Selmer groups which in turn are induced by those from cohomology groups. On the other hand, the morphisms $c^n_m(E)$ are induced by the corestriction maps on the signed Selmer groups which in turn are induced by those from cohomology groups.
As seen in Proposition \ref{Flach}, there is a perfect pairing
\[ \Sel^{\s,str}(E/F_n)/S_{div}(E/F_n)\times \Sel^{\s,str}(E/F_n)/S_{div}(E/F_n) \lra \Qp/\Zp,\]
which we shall denote by $\langle~,~\rangle_{E,n}$.

\bl \label{signed Ga-system}
Suppose that $X^{\s}(E/F_\infty)$ is torsion over $\Zp\ps{\Ga}$. Then
\[ \mathfrak{A} = \big\{ \mathfrak{a}_n(E), \mathfrak{b}_{n}(E), \langle~,~\rangle_{E,n}, r^n_m(E), c^n_m(E) ~\big|~ n\geq m \geq 0 \big\}\]
is a $\Ga$-system.
\el

\bpf
The verification of ($\Ga$-1)-($\Ga$-4) is a straightforward (but maybe tedious) exercise.
Finally, by definition, we have that
$\mathfrak{a}(E)$ ($=\mathfrak{b}(E)$) is a $\La$-submodule of $X^{\s}(E/F_\infty)$. Since $X^{\s}(E/F_\infty)$ is assumed to be torsion over $\Zp\ps{\Ga}$, so is $\mathfrak{a}(E)$. Thus, we have the lemma.
\epf

The next lemma is concerned with the structure of $Y(E/F_\infty): = \Sel_{div}(E/F_\infty)^{\vee}$. 

\bl \label{Y structure}
Suppose that $X^{\s}(E/F_\infty)$ is torsion over $\Zp\ps{\Ga}$.
 Then we have $[Y(E/F_\infty)]= [Y(E/F_\infty)]_{si}$.
\el

\bpf
By \cite[Theorem 4.1.3]{LLTT}, we see that $\big(\Sel^{\s}(E/F_\infty)^{\Ga_n}\big)_{div}$ is annihilated by a product $f$ of simple elements for every $n$. Now by Lemma \ref{control kernel}, $S_{div}(E/F_n)$ can be viewed as a submodule of $\big(\Sel^{\s}(E/F_\infty)^{\Ga_n}\big)_{div}$. Thus, it is also annihilated by $f$ for every $n$. Consequently, the module $Y(E/F_\infty)$ is annihilated by $f$, and in particular, one has $[Y(E/F_\infty)]= [Y(E/F_\infty)]_{si}$.
\epf
We can now give the proof of the main result of the paper.

\bpf[Proof of Theorem \ref{main theorem}]
As seen in Subsection \ref{first reduction}, it remains to show that
\[ [X^{\s}(E/F_\infty)]^{\iota}_{ns} = [X^{\s}(E/F_\infty)]_{ns}. \]
On the other hand, by a combination of Proposition \ref{LLTT} and Lemma \ref{signed Ga-system}, we obtain $[\mathfrak{a}'(E)]_{ns}^{\iota} = [\mathfrak{b}'(E)]_{ns}$. Since $\mathfrak{a}'(E)= \mathfrak{b}'(E)$, the conclusion of the theorem will follow from this once we show that $[\mathfrak{a}'(E)]_{ns} = [X^{\s}(E/F_\infty)]_{ns}$.
As a start, consider the following commutative diagram
\[
 \entrymodifiers={!! <0pt, .8ex>+} \SelectTips{eu}{}\xymatrix{
    0 \ar[r]^{} & S_{div}(E/F_n)  \ar[d] \ar[r] &  \Sel^{\s,str}(E/F_n)
    \ar[d] \ar[r] & \mathfrak{a}_n(E)\ar[d] \ar[r]&0 \\
    0 \ar[r]^{} & S_{div}(E/F_\infty)^{\Ga_n} \ar[r]^{} & \Sel^{\s}(E/F_\infty)^{\Ga_n} \ar[r] &
    \Big(\ilim_m \mathfrak{a}_m(E)\Big)^{\Ga_n} &  }
    \]
with exact rows, where we write $\Ga_n=\Ga^{p^n}$. By definition, the kernel of the rightmost map is $\mathfrak{a}^0_n(E)$. Since the middle map is injective by Lemma \ref{control kernel}, it follows from the snake lemma that $\mathfrak{a}^0_n(E)$ injects into $S_{div}(E/F_\infty)^{\Ga_n}/S_{div}(E/F_n)$. Since $S_{div}(E/F_\infty)^{\Ga_n}$ is annihilated by a product $f$ of simple elements for every $n$ by Lemma \ref{Y structure}, so is $\mathfrak{a}^0_n(E)$. Hence we have $[\mathfrak{a}^0(E)]= [\mathfrak{a}^0(E)]_{si}$. It then follows from this and Lemma \ref{ses LLTT} that
$[\mathfrak{a}(E)]_{ns} = [\mathfrak{a}'(E)]_{ns}$.

On the other hand, we also have the following short exact sequence
\[ 0 \lra \mathfrak{a}(E)\lra X^{\s}(E/F_\infty) \lra Y(E/F_\infty)\lra 0.\] 
It then follows from this and Lemma \ref{Y structure} that $[\mathfrak{a}(E)]_{ns}=[X^{\s}(E/F_\infty)]_{ns}$. Combining this with the conclusion obtained in the previous paragraph, we have $[\mathfrak{a}'(E)]_{ns} = [X^{\s}(E/F_\infty)]_{ns}$ as required.
\epf

\footnotesize

\end{document}